\theoremstyle{definition}
\newtheorem*{thm*}{Theorem}
\numberwithin{equation}{section}
\numberwithin{figure}{section}
\theoremstyle{plain}
\newtheorem*{cor*}{\protect\corollaryname}
\theoremstyle{plain}
\newtheorem{thm}{\protect\theoremname}[section]
\theoremstyle{definition}
\theoremstyle{remark}
\theoremstyle{plain}
\theoremstyle{plain}
\newtheorem{lem}[thm]{\protect\lemmaname}
\theoremstyle{plain}
\newtheorem{cor}[thm]{\protect\corollaryname}
\numberwithin{equation}{section}
\numberwithin{figure}{section}
 \let\footnote=\endnote
\theoremstyle{definition}
\def\k{\kappa}
\def\R{\mathbb{R}}
\def\C{\mathbb{C}}
\def\N{\mathbb{N}}
\def\Z{\mathbb{Z}}
\subjclass[2000]{}
\def\Si{\Sigma}
\def\I{\mathrm{I}}
\def\R{\mathbb{R}}
\def\C{\mathbb{C}}
\def\A{\mathcal{A}}
\def\B{\mathcal{B}}
\def\C{\mathcal{C}}
\newcommand\e{\mathbf e}
\def\M{\mathcal{M}_{\text{inv}}}
\def\Sig{\Sigma}
\def\glr{\text{GL}_d(\R)}
\def\gl2{\text{GL}_2(\R)}
\def\sl2{\text{SL}_2(\R)}
\renewcommand{\le}{\leqslant}
\renewcommand{\leq}{\leqslant}
\renewcommand{\geq}{\geqslant}
\renewcommand{\ge}{\geqslant}
  \providecommand{\corollaryname}{Corollary}
  \providecommand{\definitionname}{Definition}
  \providecommand{\lemmaname}{Lemma}
  \providecommand{\propositionname}{Proposition}
  \providecommand{\remarkname}{Remark}
  \providecommand{\theoremname}{Theorem}
\providecommand{\theoremname}{Theorem}
\definecolor{lime}{HTML}{A6CE39}
\DeclareRobustCommand{\orcidicon}{
	\begin{tikzpicture}
	\draw[lime, fill=lime] (0,0) 
	circle [radius=0.16] 
	node[white] {{\fontfamily{qag}\selectfont \tiny ID}};
	\draw[white, fill=white] (-0.0625,0.095) 
	circle [radius=0.007];
	\end{tikzpicture}
	\hspace{-2mm}
}
\renewcommand{\le}{\leqslant}
\renewcommand{\leq}{\leqslant}
\renewcommand{\geq}{\geqslant}
\renewcommand{\ge}{\geqslant}
\author{Reza Mohammadpour\orcidA{}}
\address{Reza Mohammadpour, Department of Mathematics, Uppsala University,
  Box 480, SE-751 06 Uppsala, Sweden}
\email{reza.mohammadpour(a)math.uu.se}
\author{Anthony Quas}
\address{Anthony Quas, Department of Mathematics \& Statistics,
  University of Victoria, Victoria BC, Canada V8W 2Y2}
\email{aquas(a)uvic.ca}
\thanks{Corresponding Author: Anthony Quas}
\date{\today}
\subjclass[2020]{37H15,  37D35. }
\keywords{Thermodynamic formalism, matrix cocycles, equilibrium measures, freezing phase transition.}%
\email{}
\begin{document}
\title[Non-unique equilibrium measures and freezing phase transition]{Non-unique equilibrium measures
and freezing phase transitions for matrix cocycles for negative $t$}
\begin{abstract}
We consider the one-step matrix cocycle generated by a particular pair of non-negative parabolic matrices 
and study the equilibrium measures for $t\log \|\A\|$ as $t$ runs over the reals. We show 
that there is a freezing first order phase transition at $t=-2$ so that for $t\le -2$ the 
equilibrium measure is non-unique and supported on the two fixed points, while for $t>-2$, the 
equilibrium measure is unique, non-atomic and fully supported. The phase transition 
closely resembles the classical Hofbauer example. In particular, our example shows that there may 
be non-unique equilibrium measures for negative $t$ even if the cocycle is strongly irreducible 
and proximal. 
\end{abstract}
\maketitle

\section{Introduction and statement of results}
In this paper, we study the thermodynamic formalism for matrix cocycles. We will show the 
existence of equilibrium measures of the logarithm of the $t$th power of the
norm of a particular matrix cocycle (that satisfies the strong irreducibility
and proximality conditions) for all $t \in \mathbb{R}$.

We say that $(X, T)$ is a topological dynamical system if $X$ is a compact metric 
space and $T$ is a continuous map from $X$ to $X$. We say that 
$\Phi:=(\log \phi_{n})_{n=1}^{\infty}$  is a \emph{sub-additive potential} 
over $(X, T)$ if each $\phi_{n}$ is a continuous positive-valued function on $X$ such that
\[
\phi_{n+m}(x) \leq \phi_{n}(x) \phi_{m}(T^{n}(x)) \quad \forall x\in X, m,n \in \N.
\]
Similarly, we call a sequence of continuous functions (potentials) $\Phi=(\log \phi_n)_{n \in \N}$
\emph{super-additive} if
$$
 \phi_{n}(x) \phi_{m}(T^{n}(x)) \le \phi_{n+m}(x) \quad \forall x\in X, m,n \in \N.
$$
A potential $\Phi=(\log\phi_n)_{n \in \N}$ is \emph{almost additive} if there is a $C>0$ such that for all $x\in X$
and all $m,n\in\N$
$$
\frac 1C\le \frac{\phi_{n+m}(x)}{\phi_n(x)\phi_m(T^n(x))}\le C.
$$

Given a non-additive potential $\Phi$, an \emph{equilibrium measure} is a $T$-invariant
probability measure $\mu$ for which $p(\mu)=\sup_{\nu\in \M(T)}p(\nu)$
where $p(\nu)=h_{\nu}(T)+\lim_{n\to\infty}\frac 1n\int \log\phi_n\,d\nu$ and
$\M(T)$ denotes the collection of invariant probability measures. 

For sub-additive potentials over subshifts,  existence of equilibrium measures follows 
from upper semi-continuity (e.g., \cite{Barreira-eq, Falconer, CFH08}):
both $\mu\mapsto h_{\mu}(T)$ and $\mu\mapsto\lim_{n\to\infty}\frac 1n\int\log\phi_n\,d\mu$ are upper
semi-continuous and the existence of measures maximizing $h_{\mu}(T)+\lim_{n\to \infty}\frac 1n\int\log\phi_n\,d\mu$
follows from weak$^*$-compactness of the space of invariant probability measures. The super-additive case 
is more delicate because the entropy is upper semi-continuous, while the limit of the integrals is 
lower semi-continuous.

A \emph{matrix cocycle} $\A$ over a topological dynamical system $(X, T)$ is generated by
a continuous map 
$\A \colon X \to \glr$. For $n\in \N$ and $x \in X$, we define the product of $\A$ over
the orbit segment of length $n$ as
\[
\A^n(x):= \A(T^{n-1}(x)) \ldots \A(x).\]

A well-studied class of matrix cocycles are \emph{one-step cocycles} which are defined 
as follows. Assume that $\Si=\{1,...,k\}^{\Z}$ is a symbolic space and 
$T:\Si \rightarrow \Si$  is the shift map, i.e. $T(x_{l})_{l\in \Z}=(x_{l+1})_{l\in \Z}$. 
Given a $k$-tuple of matrices $\textbf{A}=(A_{1},\ldots,A_{k})\in \glr^{k}$ , 
we associate with it the locally constant map $\mathcal{A}:\Si \rightarrow \glr$ 
given by $\mathcal{A}(x)=A_{x_{0}}$.
The $k$-tuple of 
matrices $\textbf{A}$ is called the generator of the one-step cocycle $\A$. For any 
length $n$ word $I=i_{0}, \ldots, i_{n-1},$  we denote 
\[
\mathcal{A}_{I}:=A_{i_{n-1}}\ldots A_{i_{0}}.
\]
Therefore, when $\A$ is a one-step cocycle, 
\[ 
\A^n(x)=\A_{x|_{[0,n)}}=A_{x_{n-1}}\ldots A_{x_{0}}.
\] 
In this paper, we focus on the norm potential of matrix cocycles, which provide 
well-known examples of non-additive potentials.
If $\mathcal{A} : \Sigma \to \mathrm{GL}_d(\mathbb{R})$ is a matrix cocycle and $t\in \R$, 
then $t\Phi_{\A}:=(t\log \|\A^{n}\|)_{n=1}^{\infty}$ is sub-additive when $t \geq 0$ and 
super-additive when $t<0$. By the results mentioned above, when $t\geq 0$, there is an 
equilibrium measure for $t\Phi_{\A}$.
It is known that if a matrix cocycle $\A$ satisfies the quasi-multiplicativity property, 
then there is a unique equilibrium measure with the Gibbs property for $t\Phi_{\A}$ for all $t\in\R_+$
(see e.g., \cite{feng09, feng11,  park20, MP-uniqueequ}).

In the super-additive case, $t\Phi_{\mathcal{A}}$ for $t < 0$, much less is known. Apart from some 
well-understood cases, such as the strongly conformal, reducible, or dominated 
settings (see e.g., \cite[Proposition 5.8]{Moh22-Lyapunov}), there are
not many general results concerning 
equilibrium measures for $t\Phi_{\mathcal{A}}$ in the super-additive regime. An exception is the recent results in \cite{Rush, Mohammadpour-Varandas-statistical}, which apply to values of $t$ in a neighborhood of zero. 

The following theorem, our main result, gives a complete picture of the matrix equilibrium measures for $(t\Phi_\A)$
for a particular one-step matrix-valued cocycle for all $t\in\R$. We show that there is a phase transition at an
explicit value, $t=-2$.

\begin{thm}\label{t:mainresult1}
Let $\A:\{0,1\}^{\Z} \to \gl2$ be a one-step cocycle generated by 
\[
A_0 = \begin{bmatrix}
1 & 0 \\
1 & 1
\end{bmatrix}, \quad
A_1 = \begin{bmatrix}
1 & 1 \\
0 & 1
\end{bmatrix}.
\] 
Then the following hold:
\begin{enumerate}[(i)]
    \item $t\mapsto P(t\Phi_\A)$ is non-decreasing and convex  on $\R$;\label{it:nondec}
    \item For $t\le -2$, $P(t\Phi_\A)=0$ and the equilibrium measures for $t\Phi_\A$ are
    precisely $\delta_{\bar 0}$ and $\delta_{\bar 1}$;\label{it:precritical}
    \item For $t>-2$, $P(t\Phi_\A)>0$ and there is a unique equilibrium measure, $\mu_t$ for $t\Phi_\A$. The
    measure $\mu_t$ is fully supported on $\{0,1\}^\Z$.\label{it:postcritical}
\end{enumerate}
\end{thm}
We recall that matrix  $A \in \gl2$ is \textit{proximal} if it has two real 
eigenvalues with unequal absolute values. Assume that $(A_1, \ldots, A_{\ell})\in \gl2^{\ell}$ 
generate a one-step cocycle $\A \colon \Si \to \gl2$. We say that $\A$ is proximal 
if the semigroup generated by $\{A_1, \ldots, A_{\ell}\}$  contains a proximal element; 
that is, if there exists a finite product of the matrices $A_1, \ldots, A_{\ell}$ that 
is proximal. We also say that $\A \colon \Si \to \gl2$ is \textit{strongly irreducible} 
if there does not exist a finite collection $V_{1}, \ldots, V_{m}$ of non-zero proper 
subspaces $V_{j}$ such that 
$A_{i}\left(\bigcup_{j=1}^{m} V_{j}\right)=
\bigcup_{j=1}^{m} V_{j}$ for every $i=1, \ldots, \ell$. 
Showing that
$\mathcal A$ above is strongly irreducible and proximal, will give the following.

\begin{cor}\label{cor}
There exists a strongly irreducible and proximal one-step cocycle for which the 
equilibrium measure is for $t\Phi_{\A}$ is not unique for some $t<0$.
 \end{cor}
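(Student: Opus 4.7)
The plan is to take the cocycle $\A$ furnished by Theorem~\ref{t:mainresult1} as the example. That theorem already exhibits two distinct equilibrium measures $\delta_{\bar 1}$ and $\delta_{\bar 2}$ for $t\Phi_{\A}$ whenever $t<t^*$, and since $t^*<-2.17<0$, this provides non-uniqueness at many negative parameter values. All that remains is therefore to check that this specific one-step cocycle is strongly irreducible and proximal.

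For proximality, I would exhibit a single product with a simple dominant eigenvalue. A direct computation gives
\[
A_1 A_2 \;=\; \begin{pmatrix} 1 & 1 \\ 1 & 2 \end{pmatrix},
\]
with trace $3$ and determinant $1$, so the eigenvalues are $(3\pm\sqrt{5})/2$, which are distinct and of different moduli. Hence the semigroup generated by $\{A_1,A_2\}$ contains a proximal element, which is the standard condition for the cocycle to be proximal.

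For strong irreducibility I would show that the projective action of $\langle A_1,A_2\rangle^+$ on $\mathbb{P}^1$ admits no nonempty finite invariant subset, which in dimension two is equivalent to the nonexistence of a finite union of proper subspaces of $\R^2$ preserved by the semigroup. In the affine chart $z=y/x$ on $\mathbb{P}^1$, the matrix $A_1$ acts by the translation $z\mapsto z+1$ whose only fixed point is $\infty=[(0,1)^T]$; since any finite invariant set is, by injectivity of $A_1$ on $\mathbb{P}^1$, permuted by $A_1$ and so consists of periodic points, any such set must be contained in $\{[(0,1)^T]\}$. By the symmetric argument in the chart $w=x/y$, a finite $A_2$-invariant set must lie in $\{[(1,0)^T]\}$. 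These two singletons are disjoint, so no nonempty finite subset of $\mathbb{P}^1$ is invariant under both generators, and strong irreducibility follows.

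There is essentially no obstacle in the corollary itself; all the analytic content is concentrated in Theorem~\ref{t:mainresult1}. The substance of the corollary is simply the observation that the cocycle produced there satisfies the two classical algebraic conditions, ruling out the reducible or strongly conformal mechanisms usually invoked to explain non-uniqueness in the super-additive regime.
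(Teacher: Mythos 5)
Your proposal is correct and matches the paper's (implicit) approach: the corollary is exactly the cocycle of Theorem~\ref{t:mainresult1}, which already yields the two distinct equilibrium measures $\delta_{\bar 1}$ and $\delta_{\bar 2}$ for $t<t^*<0$, and the paper leaves the routine verification of strong irreducibility and proximality unstated. Your checks are accurate: $A_1A_2$ has eigenvalues $(3\pm\sqrt5)/2$ and so is a proximal element, and the parabolic fixed-point argument correctly rules out any finite invariant subset of $\mathbb{P}^1$, giving strong irreducibility.
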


Theorem \ref{t:mainresult1} provides a counterpart to \cite[Theorem 1.1]{Rush}, where 
it is shown in generality that there is a unique equilibrium measure for the potential $t\Phi_{\A}$ for all 
$t$ in some neighborhood of zero. Corollary \ref{cor} should be compared to \cite[Proposition 10.3]{Rush}, 
where an example of a one-step cocycle is given for which there does not exist an 
equilibrium measure satisfying the Gibbs property.

In the classical additive thermodynamic formalism, equilibrium measures are the measures
for which $h_{\mu}(T)+\beta\int\phi\,d\mu$ achieves its maximum. The parameter $\beta$ is often 
referred to as the inverse temperature. If the underlying dynamical system
is a full shift and $\phi$ is H\"older continuous, the pressure is an analytic function of $\beta$
that is strictly convex except for the case where $\phi$ is cohomologous
to a constant (see \cite{Ruelle-book}). This implies that the equilibrium measures 
are distinct for distinct values of $\beta$. Invariant measures for which $\int\phi\,d\mu$
achieves its maximal value are known as maximizing measures. 
The term \emph{freezing phase transition} refers to the situation where the 
equilibrium measures for all inverse temperatures $\beta>\beta_c$ agree with a maximizing measure. 
From the above description, this can never occur for H\"older continuous potentials
\cite{ChazKuchQuas}. On the other hand, a well-known example of a continuous potential
that exhibits a freezing phase transition was was constructed by Hofbauer \cite{hofbauer1977nonuniqueness} 
(see also Ledrappier \cite{Ledrappier-phase} for a simplified proof). 
Although the example in this paper deals 
with non-additive matrix norm potentials rather than additive potentials, there is a strong
parallel with the Hofbauer example. 
The proof of the existence of the phase transition is elementary and self-contained. 

A related phenomenon occurs in the paper of Rush \cite{Rush}, where an example is given
of a one-step cocycle $\A_R$ consisting of an irrational rotation and a hyperbolic matrix. 
For that example, it was shown that $P(t\Phi_{\A_R})$ is constant on an interval $(-\infty,t_c]$
and strictly greater for all $t>t_c$. Rush's proof relies on multifractal formalism
computations \cite{DGR19} and does not give a construction of the equilibrium measures. 

\begin{figure}[ht]
\centering
\begin{tikzpicture}[scale=1.2]
  \draw[->] (-2.5,0) -- (2,0) node[right] {$t$};
  \draw[->] (0,-1) -- (0,3) node[above] {$P(t\Phi_{\mathcal{A}})$};

  \draw[blue, thick, domain=-2.5:-1.2, smooth, samples=100]
    plot(\x, {0.2*\x*\x + 1*\x + 1.25});

  \draw[blue, thick, domain=-1.2:-0.5, smooth, samples=100]
    plot(\x, {0.2*\x*\x + 1*\x + 1.25});

  \draw[red, thick, domain=-0.5:0, smooth, samples=100]
    plot(\x, {0.2*\x*\x + 1*\x + 1.25});

  \draw[black, thick, domain=0:1, smooth, samples=100]
    plot(\x, {0.2*\x*\x + 1*\x + 1.25});

  \draw[black, thick] (-2.5,0.1) -- (-2.5,-0.1) node[below left] {$t_c$};
   \draw[ blue, thick] (-2.5,0) -- (-4,0);


 

 
\end{tikzpicture}

\caption{We give a complete picture of the pressure for the
matrix cocycle that we study.
For $t> -2$, there is an equilibrium measure supported off the fixed points, and for $t\leq -2$, 
there are equilibrium measures supported at the fixed points. 
These are the only ergodic equilibrium measures. 
Note that for $t\geq 0$, the description of the equilibrium measure follows from 
Feng \cite{feng09, feng11} (in black) and for $t$ close to zero, the description follows from Rush \cite{Rush} 
(in red).}
\end{figure}
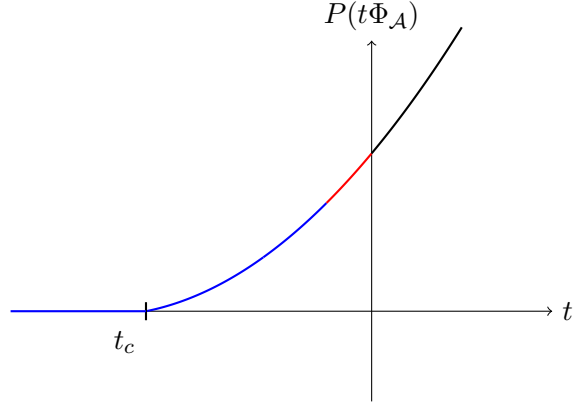


\subsection{Acknowledgements.}
Reza Mohammadpour is supported by the Swedish Research Council grant 104651320 and the Knut and Alice Wallenberg Foundation.
Anthony Quas is supported by an NSERC Discovery Grant. We thank the referees for their helpful suggestions.
\section{preliminaries}
\subsection{Set-up}\label{set up}

For each $n\in\N$, we define $\Sig_{n}$ to be the set of all length $n$ words of $\Si$, 
and we define $\Sig_{\ast}:=\bigcup_{n\in \N}\Sig_{n}$ to be the set of all words. 
For $m<0\le n$ and any sequence $a_m,\ldots,a_n$, we denote the \emph{cylinder set}
$\{x\colon x_i=a_i\text{ for $m\le i\le n$}\}$ by $[a_m\ldots a_{-1}.a_0\ldots a_n]$. 

The shift space $\Si$ is compact in the topology
generated by the cylinder sets. Moreover, the cylinder sets are open and closed in
this topology and they generate the Borel $\sigma$-algebra $\B$. 

 \subsection{Non-additive thermodynamic formalism}
Assume $\left(A_1, \ldots, A_k\right) \in GL(d, \R)^{k}$ generate a one-step cocycle 
$\A:\Sig \to GL(d, \R)$. For $t \in \R$, the {\em topological pressure} of $t\Phi_{\A}$ is defined by
\[ P(t\Phi_{\A}):=\lim_{n \to \infty}\tfrac{1}{n} \log s_n(t) , \]
where $s_{n}(t):=\sum_{I \in \Sig_n} \|\A_{I}\|^{t}$. Note that the existence 
of the limit follows from the sub-multiplicativity of $\|\cdot\|$.


Let $ \mu \in \M(T)$. We define the first Lyapunov exponent of $\A$ with respect
to $\mu$ and $T$ to be
\[\chi_1(\mu, \A):=\lim_{n\to \infty}\frac{1}{n} \int \log \|\A^{n}(x)\| d\mu(x),\]
where $\| \cdot \|$ denotes the operator norm. 
For simplicity, we denote $\chi(\mu, \A):=\chi_1(\mu, \A)$.

We recall that the \textit{Kolmogorov-Sinai entropy} of $\mu$ with respect to $T$ is
$$
h_{\mu}(T):=\lim _{n \rightarrow \infty} \frac{1}{n} \sum_{I \in \Sigma_n} \mu([\I]) \log \mu([\I]).
$$

Cao, Feng and Huang \cite{CFH08} proved a variational principle formula for the topological 
pressure of sub-additive potentials, while the counterpart for super-additive potentials was 
established by Cao, Pesin and Zhao \cite{CPZ}. 
More recently,  \cite{Mohammadpour-Varandas, Moh23} proved a variational principle for the 
generalized singular value function, which is a generalization of the family of potentials $\Phi_\A$ 
and is neither sub-additive nor super-additive (we refer the reader to 
\cite[Theorem B]{Mohammadpour-Varandas} for more details).
Hence, for any $t \in \R$,
\begin{equation}\label{varitional}
P(t\Phi_{\A})=\sup \bigg\{h_{\mu}(T)+t\chi(\mu, \A): \mu \in \mathcal{M}_{\text{inv}}(T) \bigg\}.
\end{equation}

\medskip
Any invariant measure $\mu \in \mathcal{M}_{\text{inv}}(T)$ achieving the
supremum in \eqref{varitional} is called an \textit{equilibrium measure} of $t\Phi_{\A}$. 
In other words, we say that $\mu_t$ is an {\em equilibrium measure} for $t\Phi_{\A}$ if 
\begin{equation}\label{equilibrium measure}
    P(t\Phi_{\A})= h_{\mu_t}(T)+t\chi(\mu_t, \A).
\end{equation}

We say that a probability measure $\mu_t \in \M(T)$ is a {\em Gibbs measure} for $t\Phi_{\A}$ 
if there exist $C_1, C_2>0$ such that for any $n \in \N$ and $I \in \Sigma_n$
\[
   C_1 \leq \frac{\mu_t\left([I]\right)}{e^{-nP(t\Phi_{\A}) }\|\A_{I}\|^t} \leq C_2.  
\]

\subsection{Induced maps}\label{subsec:induced}
We recall some definitions and fundamental properties of Kakutani towers.

Let $T$ be an ergodic, invertible, measure-preserving transformation on the probability space 
$(\Omega, \mathcal{B}, \mu)$, and let $D \subset \Omega$ be a measurable set with 
positive measure. For each $x \in D$, define the return time function $r_1(x):=r_D(x) = 
\inf \{ n > 0 : T^n(x) \in D \}$ and $r_k(x):=r_{k-1}(x)+r_1(T^{r_{k-1}(x)}(x))$ for each 
$k \in \mathbb{N}$. Also, for each $k\in \N$ let $D_k = \{ x \in D : r_D(x) = k \}$. 
The collection $\mathcal P_D=\{D_k\colon k\in\N\}$ forms a partition of $D$.
The induced transformation $T_D$ on the space $(D, \mathcal{B}_D, \mu_D)$ is given by 
$T_D(x) = T^{r_D(x)}(x)$. This map preserves the induced measure $\mu_D$, defined by 
$\mu_D(B) = \mu(D \cap B)/\mu(D)$, and the $\sigma$-algebra $\mathcal{B}_D$ 
consists of all sets of the form $B \cap D$, with $B \in \mathcal{B}$.

We will use the following three properties:
\begin{enumerate}
    \item The collection $\mathcal{P} = 
    \{ T^i D_k : k \in \mathbb{N},\ 0 < i < k \}$ forms a partition of $\Omega$;

    \item The measure $\mu_D$ is $T_D$-invariant and ergodic;
    
    \item The $\sigma$-algebra generated by $\mathcal{P_D}$ under the map $T_D$ 
    coincides with the restriction to $D$ of the $\sigma$-algebra generated by the collection 
    $\mathcal{Q} = \{ D, D^c \}$ under $T$.
\end{enumerate}

Let $\A:\Sigma \to \glr$ be a one-step cocycle and $D\subset \Sigma$ be as above.
We denote $\A_{D}(x)=\A^{r_1(x)}(x)$. We state the following facts that we use a number of times.
\begin{equation}\label{AbramovKnill}
\begin{aligned}
    h_{\mu}(T)&=\mu(D)h_{\mu_D}(T_{D})\text{; and}\\
    \chi(\mu,\A)&=\mu(D)\chi(\mu_D,\A_D).
\end{aligned}
\end{equation}
The first is Abramov's formula, and the second is due to Knill \cite{Knill1991}.

We recall that if $\nu_D$ is a $T_D$-invariant measure on $D$, there is a corresponding
$T$-invariant measure $\nu$ on $\Sigma$, called the \emph{lift} of $\nu_D$ to $\Sigma$. In the case
where $\int r_D\,d\nu_D$ is finite, the measure $\nu$ is a probability measure and $\nu_D$ satisfies
$\nu_D(A)=\nu(A\cap D)/\nu(D)$ as above. In the case where $\int r_D\,d\nu_D$ is infinite, the 
measure $\nu$ is a $\sigma$-finite invariant measure. See \cite[\S1.5]{Aaronson} for more details.

\section{Preliminary lemmas and proofs}
For the remainder of the article, we fix $ \Sigma = \{0,1\}^{\mathbb{Z}} $, 
the full shift $ T: \Sigma \to \Sigma $, 
and the locally constant map $ \A: \Sigma \to \gl2 $ generated by
\[
A_0 = \begin{bmatrix}
1 & 0 \\
1 & 1
\end{bmatrix}, \quad
A_1 = \begin{bmatrix}
1 & 1 \\
0 & 1
\end{bmatrix}.
\]

\begin{lem}\label{lower bound}
Let $I = \bigl(0^{i_0} 1^{j_0} 0^{i_1}1^{j_1} \dots 0^{i_{k-1}} 1^{j_{k-1}}\bigr), $ where $i_\ell, j_\ell\in \mathbb{N}$. Then,
\[
\|\mathcal{A}_I\| \;\ge\; (1+i_0j_0)\ldots (1+i_{k-1}j_{k-1}).
\]
\end{lem}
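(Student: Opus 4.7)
The plan is to proceed by a direct induction on $k$, tracking the $(1,1)$-entry of the relevant partial product. The key preliminary observation is that $A_1$, $A_2$, and all their powers have entrywise non-negative entries, so every product $\mathcal{A}_I$ considered here has entrywise non-negative entries. Since the operator norm satisfies
\[
\|M\| \ge \|Me_1\| = \sqrt{M_{11}^2 + M_{21}^2} \ge M_{11},
\]
it suffices to prove that the $(1,1)$-entry of $\mathcal{A}_I$ is at least $\prod_{\ell=1}^k(1+i_\ell j_\ell)$.

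Because the convention in the paper is $\mathcal{A}_I=A_{i_{n-1}}\cdots A_{i_0}$, the product I want to analyze is
\[
\mathcal{A}_I \;=\; A_2^{j_k}A_1^{i_k}\cdots A_2^{j_1}A_1^{i_1}.
\]
Using the explicit formulas $A_1^i=\left(\begin{smallmatrix}1&0\\i&1\end{smallmatrix}\right)$ and $A_2^j=\left(\begin{smallmatrix}1&j\\0&1\end{smallmatrix}\right)$, a direct computation gives
\[
A_2^{j}A_1^{i}=\begin{pmatrix}1+ij & j\\ i & 1\end{pmatrix}.
\]
This yields the base case $k=1$ immediately: the $(1,1)$-entry of $A_2^{j_1}A_1^{i_1}$ equals $1+i_1j_1$.

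For the inductive step, write the partial product $M_k := A_2^{j_k}A_1^{i_k}\cdots A_2^{j_1}A_1^{i_1} = \left(\begin{smallmatrix}a_k & b_k\\ c_k & d_k\end{smallmatrix}\right)$ with $a_k,b_k,c_k,d_k\ge 0$. A short computation shows
\[
M_{k+1} \;=\; A_2^{j_{k+1}}A_1^{i_{k+1}}M_k
\;=\;\begin{pmatrix}(1+i_{k+1}j_{k+1})a_k+j_{k+1}c_k & (1+i_{k+1}j_{k+1})b_k+j_{k+1}d_k\\ i_{k+1}a_k+c_k & i_{k+1}b_k+d_k\end{pmatrix}.
\]
In particular $a_{k+1}\ge (1+i_{k+1}j_{k+1})a_k$ by non-negativity of $c_k$, so the induction closes and gives $a_k\ge\prod_{\ell=1}^k(1+i_\ell j_\ell)$, which combined with the norm bound above completes the proof.

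The argument is essentially routine; the only thing to be careful about is the matrix-multiplication order induced by the paper's convention, and choosing the right entry to track. Tracking $(1,1)$ works because left-multiplication by $A_1^i$ only adds a non-negative multiple of the first row to the second row (leaving the first row untouched), and the subsequent left-multiplication by $A_2^j$ then injects the accumulated second-row growth back into the first row, producing the factor $1+ij$ cleanly. No further estimates are needed.
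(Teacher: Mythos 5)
Your proof is correct and follows essentially the same route as the paper's: compute $A_2^{j}A_1^{i}=\bigl(\begin{smallmatrix}1+ij & j\\ i & 1\end{smallmatrix}\bigr)$ and show by induction, using non-negativity of the entries, that the $(1,1)$-entry of the product dominates $\prod_{\ell=1}^k(1+i_\ell j_\ell)$, then bound the norm below by that entry. You simply spell out the inductive step and the bound $\|M\|\ge M_{11}$, which the paper leaves implicit.
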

\begin{proof}
Note that
\[
A_0^{i} = \begin{bmatrix}1 & 0 \\ i & 1\end{bmatrix}, \qquad
A_1^{j} = \begin{bmatrix}1 & j \\ 0 & 1\end{bmatrix}.
\]
Therefore, defining $B_l:=A_1^{j_l}A_0^{i_l}$, we have $B_l=\begin{bmatrix}1+i_lj_l & j_l \\ i_l & 1\end{bmatrix}$.

One may show by induction that $(B_{k-1}\ldots B_0)_{11}\geq (1+i_0j_0)\ldots (1+i_{k-1}j_{k-1})$, 
where $B_{11}$ refers
to the upper left entry of $B \in \gl2$. 
Hence, $\|B_{k-1}\ldots B_0\| \geq (1+i_0j_0)\ldots (1+i_{k-1}j_{k-1})$. 
\end{proof}

\begin{lem}\label{positive LE}
For any ergodic measure $\mu$ with $\mu\ne \delta_{\bar0},\delta_{\bar 1}$,
$\chi(\mu, \A)>0$.
\end{lem}
\begin{proof}
Let $D=[1.0]=\{x\colon x_{-1}=1, x_0=0\}$. Since $\mu$ is not supported on either fixed
point, $\mu([0])>0$ and $\mu([1])>0$. We have $[0]=[0.0]\cup[1.0]$. If $\mu([0]\setminus[0.0])$ were equal to 0,
then $[0]$ would be an invariant set. So, by ergodicity, we see $\mu(D)>0$. We define a partition of $D$,
$\{X_{i,j}\colon i,j\in \N\}$, where
$$
X_{i,j}=[1.0^i1^j0].
$$
We induce on $D$ and let $T_D$ be the return map with ergodic induced measure $\mu_D$.
Applying Lemma \ref{lower bound}, we see if $x\in X_{i_0,j_0}\cap T_D^{-1}X_{i_1,j_1}\cap\ldots\cap
T_D^{-(n-1)}X_{i_{n-1},j_{n-1}}$,
$$
\log\|\A_D^n(x)\|\ge \sum_{k=0}^{n-1} \log(1+i_kj_k).
$$
 Defining $f(x)$ by $f(x)=\log (1+ij)$ if $x\in X_{i,j}$, this can be rewritten as
$$
\log\|\A_D^n(x)\|\ge \sum_{k=0}^{n-1}f(T_D^k(x)).
$$
Since $f\ge\log 2$, we see $\chi(\mu_D, \A_D)\geq \log 2$, so that by \eqref{AbramovKnill},
$\chi(\mu,\A)>0$ as
required.
\end{proof}

For the proof of the main theorem, we will need to study the action of matrices with non-negative entries on the
projective non-negative quadrant. Given a non-zero vector $\mathbf v$
with non-negative entries,
the ray with direction $\mathbf v$ is $[\mathbf v]_+=\{s\mathbf v\colon s\in \R^+\}$. 
The space of non-negative rays is identified with $[0,1]$ by the correspondence
$x\mapsto \left[\begin{smallmatrix}x\\1-x\end{smallmatrix}\right]_+$.
Given a matrix $A$ with non-negative entries, $A$ acts on the space of rays by
$A[\mathbf v]_+=[A\mathbf v]_+$.

In terms of the interval parameterization, the action
of the matrix $A=\left[\begin{smallmatrix}a&b\\c&d\end{smallmatrix}\right]$ is given by
the M\"obius transformation
$$
f_A(x)=\frac{ax+b(1-x)}{(a+c)x+(b+d)(1-x)}.
$$
To see this, notice that
$$
\begin{aligned}
\begin{bmatrix}
    a&b\\c&d
\end{bmatrix}\begin{bmatrix}
    x\\1-x
\end{bmatrix}
&=\begin{bmatrix}
    ax+b(1-x)\\cx+d(1-x)
\end{bmatrix}\\
&=\big((a+c)x+(b+d)(1-x)\big)\begin{bmatrix}
    \frac{ax+b(1-x)}{(a+c)x+(b+d)(1-x)}\\
    \frac{cx+d(1-x)}{(a+c)x+(b+d)(1-x)}.
\end{bmatrix}
\end{aligned}
$$

Hence, we define the action on $[0,1]$ by 
$$
A\begin{bmatrix}
    x\\1-x
\end{bmatrix}_+=\begin{bmatrix}
    f_A(x)\\1-f_A(x)
\end{bmatrix}_+.
$$

Note that the non-standard form of the M\"obius transformation arises as we are
parameterizing projective space by the diagonal line $x+y=1$ (in which the parameter range 
for the non-negative projective space is
$[0,1]$) rather than the line $x=1$ (where the parameter range would be $[0,\infty]$).
Since $AB[\mathbf v]_+=A[B\mathbf v]_+$, one can see that $f_{AB}=f_A\circ f_B$. 
The same M\"obius transformations appear in \cite{JenkinsonPollicott}.
The following lemma shows how the behaviour of these compositions of M\"obius transformations 
is related to the norm of the products of matrices that we wish to study.

\begin{lem}\label{lem:Mobius}
    Let $f_0$ and $f_1$ denote the mappings $f_0(x)=\frac{x}{1+x}$ and $f_1(x)=\frac{1}{2-x}$,
    the M\"obius transformations of $[0,1]$ associated with $A_0=\left[\begin{smallmatrix}1&0\\1&1\end{smallmatrix}\right]$
    and $A_1=\left[\begin{smallmatrix}1&1\\0&1\end{smallmatrix}\right]$.
   For any $n \in \N$ and any word $I=i_0\ldots i_{n-1}\in\Sigma_n$, let $f_I=f_{i_{n-1}}\circ
    \ldots\circ f_{i_{0}}$ be the M\"obius transformation associated with $\A_I$
    and let $J_I=f_I([0,1])$. 
    Then for each $n\in\N$, 
    \begin{enumerate}[(i)]
        \item $\bigcup_{I\in\Sigma_n}J_I=[0,1]$; \label{it:cover}
        \item the sets $\operatorname{int}(J_I)$ are pairwise disjoint as $I$ runs over $\Sigma_n$;\label{it:disjoint}
        \item there exists $C>1$ such that for all $n\in\N$ and all $I\in\Sigma_n$, \label{it:normest}
        \begin{equation}\label{eq:normA2bound}
        \frac 1C\,\frac{\operatorname{Leb}(J_I)}{k(I)}\le \frac 1{\|\A_I\|^2}\le C\frac{\operatorname{Leb}(J_I)}{k(I)},
        \end{equation}
        where $1\le k(I)\le n$ is the largest $k$ such that $I$ starts with $0^k$ or $1^k$. 
    \end{enumerate}
\end{lem}

\begin{proof}
One can see that $f_0$ is strictly increasing and maps $[0,1]$ to $[0,\frac12]$, 
while $f_1$ is strictly increasing and maps $[0,1]$ to $[\frac12,1]$, so that claims \ref{it:cover} and \ref{it:disjoint} hold for $n=1$. 
Given that the claim holds for $n$, we have $J_{I0}=f_0(J_I)$ so that the $J_{I0}$ with $I$ running over $\Sigma_n$
cover $J_0$ and have pairwise disjoint interiors. Similarly the $J_{I1}$ cover
$J_1$ and have pairwise disjoint interiors. Since $J_0$ and $J_1$ have disjoint 
interiors, both claims hold for $n+1$.


We now establish claim \ref{it:normest}. 
We use the notation 
$X_I=\Theta(Y_I)$ to mean that the ratio of the quantities is uniformly bounded 
above and below independently of $n$ and $I$.
For any $I$, $\A_I$ maps $[0,1]^2$ to a parallelogram
of area 1 with side lengths $\|\A_Ie_1\|$ and $\|\A_Ie_2\|$, so that $\|\A_Ie_1\|\|\A_Ie_2\|\sin\theta_I=1$,
where $\theta_I$ is the angle of the parallelogram at the origin.
Since $0<\theta_I<\frac\pi 2$, we have $\frac 2\pi \theta_I<\sin\theta_I<\theta_I$
so that $\theta_I=\Theta(\sin\theta_I)$
and 
$$
\theta_I=\Theta\left(\frac{1}{\|\A_Ie_1\|\|\A_Ie_2\|}\right).
$$
Since the segment of the parallelogram
on the line $x+y=1$ is of length $\sqrt 2\operatorname{Leb}(J_I)$, 
we see $\operatorname{Leb}(J_I)=\Theta(\theta_I)$.
Hence we have established 
$$
\operatorname{Leb}(J_I)=\Theta\left(\frac{1}{\|\A_Ie_1\|\|\A_Ie_2\|}\right).
$$
To finish the proof of the claim, we need to show that 
\begin{equation}\label{eq:tocheck}
\|\A_I\|^2=\Theta\Big(k(I)\|\A_Ie_1\|\|\A_Ie_2\|\Big).
\end{equation}

In the case that $I=0^n$ or $1^n$, $\A_I$
is $\left[\begin{smallmatrix}1&0\\n&1\end{smallmatrix}\right]$ 
or $\left[\begin{smallmatrix}1&n\\0&1\end{smallmatrix}\right]$ 
respectively. We see that $k(I)=n$, $\|\A_I\|^2=\Theta(n^2)$, and 
$\|\A_Ie_1\|\|\A_Ie_2\|=\Theta(n)$, so \eqref{eq:tocheck} holds in this case. 
Otherwise, let $I=0^k1\tilde I$ or $1^k0\tilde I$. We deal with the first case, but the second
is exactly similar. Let $B=\A_{\tilde I}$ so that $\A_I=BA_1A_0^{k}$.

For a non-negative matrix, we have the bound
$$
\|A\|\le \left\|A\left[\begin{smallmatrix}1\\1\end{smallmatrix}\right]\right\|\le \sqrt 2\|A\|.
$$
so that $\|A\|=\Theta(\|A\left[\begin{smallmatrix}1\\1\end{smallmatrix}\right]\|)$.

We now have 
\begin{equation}\label{eq:ingr1}
\begin{aligned}
    \|\A_I\|&=\Theta(\|BA_1A_0^k\left[\begin{smallmatrix}1\\1\end{smallmatrix}\right]\|)\\
    &=\Theta(\|B\left[\begin{smallmatrix}k+2\\k+1\end{smallmatrix}\right]\|)\\
    &=\Theta(k\|B\|).
\end{aligned}
\end{equation}
On the other hand, we have
\begin{equation}\label{eq:ingr2}
\begin{aligned}
    \|\A_Ie_1\|&=\|BA_1A_0^ke_1\|\\
    &=\|B\left[\begin{smallmatrix}k+1\\k\end{smallmatrix}\right]\|=\Theta(k\|B\|)\text{, while}\\
    \|\A_Ie_2\|&=\|BA_1A_0^ke_2\|\\
    &=\|B\left[\begin{smallmatrix}1\\1\end{smallmatrix}\right]\|=\Theta(\|B\|).
\end{aligned}
\end{equation}
Together, \eqref{eq:ingr1} and \eqref{eq:ingr2} establish \eqref{eq:tocheck}, completing the proof
of conclusion \ref{it:normest} of the theorem.
\end{proof}

For any $y\in [0,1]$, $y$ can be uniquely represented as $f_{i}(x)$ for some $i\in\{0,1\}$ and some $x\in[0,1]$
unless $y=\frac 12$
in which case $y$ can be written as $f_0(1)$ or $f_1(0)$. 
It follows that for each $n\in\N$, all but countably many $y\in[0,1]$ can be uniquely represented in the form $f_I(x)$
with $I\in\Sigma_n$ and $x\in[0,1]$.
We use the following map to iterate these preimages:
$$
S(x)=
\begin{cases}f_0^{-1}(x)=\frac x{1-x}&\text{if $x\in [0,\frac12)$;}\\
f_1^{-1}(x)=2-\frac 1x&\text{if $x\in [\frac 12,1]$.}
\end{cases}
$$
We also define 
$$
I(x)=
\begin{cases}0&\text{if $x\in [0,\frac12)$;}\\
1&\text{if $x\in [\frac 12,1]$,}
\end{cases}
$$
and $I_n:[0,1]\to\Sigma_n$ by 
$I_n(y):=I(S^{n-1}y),I(S^{n-2}(y))\ldots,I(y)$, so that $y\in f_{I_n(y)}([0,1])$.
We show that the map $S$ preserves an (infinite) ergodic absolutely continuous measure, so that we can
use ergodic theorems to control the behaviour of the quantities appearing in \eqref{eq:normA2bound}.

\begin{lem}\label{lem:infmeas}
Let $S$ and $I_n$ be as above. 
\begin{enumerate}[(i)]
\item For any $y\in[0,1]$ and any $n\in\N$, $y=f_{I_n(y)}(S^ny)$;
\item $S$ preserves an ergodic absolutely continuous $\sigma$-finite measure $\lambda$ with density
with respect to Lebesgue measure given by
$$
\rho(x)=\tfrac 1x+\tfrac1{1-x}.
$$
\end{enumerate}
\end{lem}

\begin{proof}
    The first part is straightforward as the sequence of $f_0$'s and $f_1$'s in $f_{I_n(y)}$
    invert the inverse images defining $S^n(y)$ one-by-one. 

    For the second part, we verify that the density of the push-forward of $\lambda$ under $S$ is
    $\rho$. The map $S$ is two-to-one: each $x\in[0,1]$ is the image under $S$ of $f_0(x)$ and
    of $f_1(x)$. We therefore have to verify that
    $$
    \rho(x)=\rho(f_0(x))f_0'(x)+\rho(f_1(x))f_1'(x).
    $$
This is a straightforward calculation.
The ergodicity of $(S,\lambda)$ follows from \cite{Thaler}.
\end{proof}

\begin{proof}[Proof of Corollary \ref{cor}]
We show that the matrix cocycle $\mathcal A$ in Theorem \ref{t:mainresult1}
is proximal and strongly irreducible. Proximality follows since
$A_0A_1=\left(\begin{smallmatrix}1&1\\1&2\end{smallmatrix}\right)$, which has
one eigenvalue inside and one outside the unit circle. 

Given any one-dimensional subspace $V$, if $V\ne \operatorname{lin}(e_2)$, then $A_0^n(V)$ is a
sequence of subspaces converging to $\operatorname{lin}(e_2)$. If $V\ne\operatorname{lin}(e_1)$, then $A_1^n(V)$ converges
to $\operatorname{lin}(e_1)$. It follows that there is no finite set of subspaces permuted by both $A_0$ and $A_1$. 
\end{proof}

In preparation for the proof of Theorem \ref{t:mainresult1}, we prove an
almost multiplicativity result. 

\begin{lem}[Almost multiplicativity]\label{Almost additivity}
Let $\C:=\big\{\left[\begin{smallmatrix}1+mn & n \\ m & 1\end{smallmatrix}\right] : 
m,n \in \N \big\}$. For each $n, m \in \N$, and any sequence $B_1,\ldots,B_{n+m}$ in $\C$, we have
\color{black}
\[
\tfrac{1}{2\sqrt{2}}\|B_{m+n} \ldots B_{m+1}\| \|B_{m}\ldots B_1\|
\leq 
\|B_{m+n}\ldots B_1\| 
\leq 
\|B_{m+n} \ldots B_{m+1}\|\|B_{m} \ldots B_{1}\|.
\]
\end{lem}

\begin{proof}
We prove the lemma in a series of claims. 
Let $P=\{(x, y): x\geq 0, y \geq 0\}$. Then we first claim 
\begin{equation}\label{eq:smallcone}
B_n\ldots B_1 P \subset \{(x,y): x\geq y \geq 0\}\text{ for any $B_1,\ldots B_n \in\C$}.
\end{equation}
To see this, since $B_1,\ldots,B_{n-1}$ are non-negative $B_{n-1}\ldots B_1P\subseteq P$. 
Then it is easy to check that $B_nP\subset\{(x,y)\colon x\geq y\ge 0\}$.

Next we claim 
\begin{equation}\label{eq:dom}
    B_n \ldots B_1 \e_1 \succeq B_n \ldots B_1 \e_2\text{ for any finite sequence 
of matrices in $\C$},
\end{equation}
where $(a, b)\succeq (c,d)$ means that $a\geq c$ and $b\geq d$.
To see this, since $B_1\e_1=(m_1n_1+1)\e_1+n_1\e_2$ and $B_1\e_2=m_1\e_1+1\e_2$, we see $B_1\e_1\succeq B_1\e_2$. 
If a matrix $B$ has non-negative entries, one can check $B\mathbf x\succeq B\mathbf y$ whenever
$\mathbf x\succeq \mathbf y$.

We next claim
\begin{equation}\label{eq:e1bound}
\|B_n \ldots B_1\e_1\|\ge \frac 1{\sqrt 2}\|B_n \ldots B_1\|
\text{ for any finite sequence of matrices in $\C$}.
\end{equation}

Since $(B_n\ldots B_1)^T(B_n\ldots B_1)$ has positive entries, by the Perron-Frobenius theorem,
the dominant eigenvector has positive entries. That is, there exists $\mathbf v$ with 
positive entries such that $\|\mathbf v\|=1$ and $\|B_n\ldots B_1\mathbf v\|=\|B_n\ldots B_1\|$.
Let $\mathbf v=\alpha\e_1+\beta\e_2$. Then by \eqref{eq:dom},
$B_n\ldots B_1((\alpha+\beta)\e_1)\succeq
B_n\ldots B_1\mathbf v$, so that taking norms, $(\alpha+\beta)\|B_n\ldots B_1\e_1\|
\ge \|B_n\ldots B_1\|$. Since $\|v\|=1$, we see $\alpha+\beta\le \sqrt 2$, so that
$\|B_n\ldots B_1\e_1\|\ge \frac 1{\sqrt 2}\|B_n\ldots B_1\|$ as required.    

We now complete the proof.
   Let $B_m\ldots B_1\e_1=\alpha e_1+\beta e_2$. By \eqref{eq:smallcone}, $\alpha\ge \beta$.
    Hence 
$$
\begin{aligned}
\alpha&=\|\alpha e_1\|\ge \tfrac 1{\sqrt 2}\|B_m\ldots B_1\e_1\|\\
&\ge\tfrac 12\|B_m\ldots B_1\|,
\end{aligned}
$$
where we used \eqref{eq:e1bound}. 
We then have $B_{m+n}\ldots B_1\e_1\succeq \alpha B_{m+n}\ldots B_{m+1}\e_1$,
so that 
$$
\begin{aligned}
\|B_{m+n}\ldots B_1\|&\ge \|B_{m+n}\ldots B_1\e_1\|\\
&\ge \alpha\|B_{m+n}\ldots B_{m+1}\e_1\|\\
&\ge \tfrac{\alpha}{\sqrt 2}\|B_{m+n}\ldots B_{m+1}\|.
\end{aligned}
$$
Substituting the earlier inequality for $\alpha$ establishes 
$$
\|B_{m+n}\ldots B_1\|\ge 
\tfrac{1}{2\sqrt{2}}\|B_{m+n} \ldots B_{m+1}\| \|B_{m}\ldots B_1\|.
$$ 
The other inequality follows from sub-multiplicativity. 
\end{proof}

The proof of Theorem \ref{t:mainresult1}\ref{it:postcritical} relies on the following theorem 
of Iommi and Yayama. The definitions of the BIP (big images and preimages) property and 
of Bowen sequences, in the hypotheses in the theorem, are Definitions 4.3 and 2.4
respectively in \cite{IommiYayama2012}.

\begin{thm}[{\cite[Proposition 3.1 and Theorem 4.1]{IommiYayama2012}}]\label{thm:IY}
Let $(\Omega,T)$ be a topologically mixing countable state Markov shift with the BIP
(big images and preimages)
property. Let $\Psi = (\log \psi_n)_{n\in\N}$ be an almost-additive 
Bowen sequence defined on $\Omega$. Then we have
\begin{enumerate}
    \item $P(\Psi)=\sup \left\{
    P(\Psi_Y):\text{ $Y$ is a Markov subshift of $\Omega$ with finitely many symbols}\right\}$;
    \label{it:VP}
    \item If $\sum_a\sup \psi_1|_{[a]} < \infty$
then there is a mixing Gibbs measure $\mu$ for $\Psi$. Moreover, if
$h_{\mu}(T) < \infty$, then $\mu$ is the unique equilibrium measure for $\Psi$.
\label{it:Gibbs}
\end{enumerate}

\end{thm}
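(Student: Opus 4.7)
My plan is to adapt Sarig's thermodynamic formalism for countable-state Markov shifts to the almost-additive Bowen setting. Almost-additivity gives multiplicative behavior up to a uniform bounded error, and the Bowen condition provides distortion control on cylinders, so the classical Ruelle--Perron--Frobenius approach carries through with only technical modifications; the role of the BIP and summability hypotheses is to confine the non-compactness of $\Omega$ to a tractable perturbation of the finite-state picture.

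For part \ref{it:VP}, the inequality $P(\Psi) \ge \sup_Y P(\Psi_Y)$ is immediate by restriction. For the converse, I would first express $P(\Psi)$ as a Gurevich-type quantity $\lim_{n\to\infty} \tfrac{1}{n} \log Z_n(\Psi, a)$, where $Z_n(\Psi, a) = \sum_{T^n x = x,\, x_0 = a} \psi_n(x)$, using almost-additivity together with a Fekete-style argument to show the limit exists and equals $P(\Psi)$. The BIP property then provides a finite set $F$ of symbols such that every admissible transition can be routed through $F$ in bounded time. Given an exhaustion of $\Omega$ by finite Markov subshifts $Y_k$ generated by $F$ together with finitely many extra symbols, almost-additivity controls the cost of splicing periodic orbits back to $F$, so that $P(\Psi_{Y_k})$ approximates the contribution of orbits supported on the symbols of $Y_k$, and letting $k \to \infty$ exhausts $P(\Psi)$.

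For part \ref{it:Gibbs}, define the Ruelle operator $\mathcal{L} f(x) = \sum_{T y = x} \psi_1(y) f(y)$; the summability hypothesis ensures $\mathcal{L}$ is bounded on $C_b(\Omega)$, while the Bowen condition supplies the distortion estimates needed to work in a Banach space of locally Lipschitz functions with summable variations. Following Sarig and Mauldin--Urba\'nski, I would produce a positive eigenfunction $h > 0$ and a conformal eigenmeasure $\nu$ with $\mathcal{L}^* \nu = e^{P(\Psi)} \nu$ via a cone-contraction fixed point argument. The measure $d\mu = h\, d\nu$ is then $T$-invariant, mixing (via BIP), and satisfies the Gibbs property for $\Psi$ with constants depending only on the almost-additivity and Bowen constants. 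For uniqueness when $h_\mu(T) < \infty$: any equilibrium measure $\eta$ must realize the supremum in \eqref{varitional}, and a standard comparison with $\mu$ on the cylinder partition, combined with Bowen control of $\log(d\eta/d\mu)$ on cylinders, forces $\eta = \mu$ by a convexity/strict-concavity-of-entropy argument in the spirit of Ledrappier--Walters.

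The main obstacle is the construction of the positive eigenfunction and eigenmeasure on a non-compact shift with only almost-multiplicative weights: the cone-contraction estimates must absorb the bounded almost-additive error at each iterate without accumulating, and the summability-at-infinity hypothesis is essential to ensure tightness and prevent mass from escaping through the countably many cylinders. A secondary subtlety is that the Bowen property only controls variation of $\log\psi_n$ on $n$-cylinders up to a fixed constant, so iteration estimates for $\mathcal{L}$ must be formulated in terms of bounded variations rather than exponential contraction of variations, which slightly weakens the standard Doeblin--Fortet picture but still suffices for existence, uniqueness and mixing once BIP is invoked.
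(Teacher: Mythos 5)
First, a point of orientation: the paper does not prove this statement at all --- it is quoted verbatim from Iommi--Yayama \cite[Proposition 3.1 and Theorem 4.1]{IommiYayama2012} and used as a black box. So the comparison here is against the cited source, not an argument in the paper. Your sketch of part \eqref{it:VP} (Gurevich-type partition functions over periodic orbits, exhaustion by finite Markov subsystems, BIP used to splice orbits through a fixed finite set of symbols at bounded almost-additive cost) is essentially the strategy of Iommi--Yayama's Proposition 3.1, which in turn adapts Sarig's approximation property; as a sketch it is sound.

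Part \eqref{it:Gibbs}, however, has a genuine gap at its core. For an almost-additive sequence $\Psi=(\log\psi_n)$, the operator $\mathcal{L}f(x)=\sum_{Ty=x}\psi_1(y)f(y)$ sees only $\psi_1$, and $\psi_n$ is \emph{not} the Birkhoff product of $\psi_1$: almost-additivity with constant $C\geq 1$ only gives $C^{-(n-1)}\prod_{j=0}^{n-1}\psi_1(T^jy)\leq\psi_n(y)\leq C^{n-1}\prod_{j=0}^{n-1}\psi_1(T^jy)$. The error per concatenation is bounded, but over $n$ steps it compounds to $C^{n-1}$, i.e.\ it shifts the exponential growth rate by up to $\log C$. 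Consequently the spectral radius of $\mathcal{L}$ is $e^{P(\log\psi_1)}$, which can differ from $e^{P(\Psi)}$, and a conformal measure for $\mathcal{L}$ is a Gibbs measure for the additive potential $\log\psi_1$, not for $\Psi$. Your own caveat that the cone estimates "must absorb the bounded almost-additive error at each iterate without accumulating" is exactly the point at which the argument fails: the errors do accumulate linearly in the exponent, and no single operator built from $\psi_1$ can recover $P(\Psi)$. The actual proof (Iommi--Yayama, following Barreira and Mummert in the compact case and Sarig's BIP construction) bypasses the transfer operator entirely: one defines approximating measures directly from the partition sums $\sum_{x\in[I]}\psi_n(x)$ normalized by $e^{nP(\Psi)}$, uses almost-additivity together with BIP to get uniform two-sided Gibbs bounds on cylinders, extracts a weak-$*$ limit (tightness coming from the summability hypothesis $\sum_a\sup\psi_1|_{[a]}<\infty$), and proves uniqueness by showing that any equilibrium measure with finite entropy must satisfy the same Gibbs inequalities and hence coincide with $\mu$ by ergodicity. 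Your uniqueness paragraph is compatible with that last step, but the existence construction needs to be replaced.
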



In our context, $\Omega$ will be a countable symbol full shift (which automatically has the BIP property). 
For the potentials we consider, $\psi_n(\omega)$ only depends on $\omega_0,\ldots\omega_{n-1}$
which ensures that the Bowen property is satisfied. So to apply Theorem \ref{thm:IY}, it suffices
to check the almost additivity and summability conditions in the theorem.
In this case, in the first conclusion, we can consider systems $Y$ that are full subshifts
on finitely many symbols.

We restate Theorem \ref{t:mainresult1} for the convenience of the reader. 
\begin{thm*}
Let $\A:\{0,1\}^{\Z} \to \gl2$ be a one-step cocycle generated by 
\[
A_0 = \begin{bmatrix}
1 & 0 \\
1 & 1
\end{bmatrix}, \quad
A_1 = \begin{bmatrix}
1 & 1 \\
0 & 1
\end{bmatrix}.
\] 
Then the following hold:
\begin{enumerate}[(i)]
    \item $t\mapsto P(t\Phi_\A)$ is non-decreasing and convex  on $\R$;
    \item For $t\le -2$, $P(t\Phi_\A)=0$ and the equilibrium measures for $t\Phi_\A$ are
    precisely $\delta_{\bar 0}$ and $\delta_{\bar 1}$;
    \item For $t>-2$, $P(t\Phi_\A)>0$ and there is a unique equilibrium measure, $\mu_t$ for $t\Phi_\A$. The
    measure $\mu_t$ is fully supported on $\{0,1\}^\Z$.
\end{enumerate}
\end{thm*}
\color{black}

\begin{proof}[Proof of Theorem \ref{t:mainresult1}]
Since $\|\A^n(x)\|>1$ for each $x$ and each $n\in\N$, one can see that 
$t\mapsto P(t\Phi_\A)$ is non-decreasing in $t$. \cite[Lemma 2.2]{feng09} shows that $t\mapsto P(t\Phi_\A)$ is 
convex (and hence continuous). This establishes conclusion \ref{it:nondec} of the theorem.

Since $\chi(\delta_{\bar i},\A)=0$ and $h_{\delta_{\bar i}}(T)=0$ for $i=0,1$, by 
the variational principle, we see $P(t\Phi_\A)\ge 0$ for all $t\in\R$. 

We now show that $P(-2\Phi_\A)=0$ by showing that $\sum_{I\in\Sigma_n}\|\A_I\|^{-2}$ is uniformly bounded in $n$. 

Let $n\in\N$. By Lemma \ref{lem:Mobius}\ref{it:normest}, for each $I\in\Sigma_n$,
$1/\|\A_I\|^2\le C\operatorname{Leb}(J_I)/k(I)\le C\operatorname{Leb}(J_I)$,
where $C$ is independent of $I$ and $n$. Since the intervals $J_I$ have disjoint interiors and
contained in $[0,1]$, we see 
$$
\sum_{I\in \Sigma_n}1/\|\A_I\|^2\le C.
$$
Since this holds for all $n$, it follows that $P(-2\Phi_\A)\le 0$.
Hence,
$P(t\Phi_\A)=0$ for $t\in(-\infty,-2]$ and $\delta_{\bar 0}$ and $\delta_{\bar 1}$ are 
equilibrium measures for $t\Phi_\A$ for all $t\le-2$. 

We now show that if $t<-2$, $\delta_{\bar 0}$ and $\delta_{\bar 1}$ are the only ergodic equilibrium 
measures for $t\Phi_\A$. To see this, let $t<-2$ and suppose $\mu$ is an ergodic equilibrium measure
for $t\Phi_\A$. Then 
$$
\begin{aligned}
    0&=P(-2\Phi_\A)\\
    &\ge h_\mu(T)-2\chi(\mu,\A)\\
    &=h_\mu(T)+t\chi(\mu,\A)+(-2-t)\chi(\mu,\A)\\
    &=P(t\Phi_\A)+(-2-t)\chi(\mu,\A)\\
    &=(-2-t)\chi(\mu,\A).
\end{aligned}
$$
Hence $\chi(\mu,\A)\le 0$ so by Lemma \ref{positive LE}, $\mu$ is $\delta_{\bar 0}$ or $\delta_{\bar 1}$ as
claimed.


This establishes most of 
conclusion \ref{it:precritical} of the theorem.
All that remains is the claim that $\delta_{\bar 0}$
and $\delta_{\bar 1}$ are the only ergodic equilibrium measures for $t=-2$. We address this part of the claim later.

We now show that for $t>-2$, $P(t\Phi_\A)>0$. 
Since $t\mapsto P(t\Phi_\A)$ is non-decreasing,
it suffices to show that for $P(t\Phi_\A)>0$ for all $t\in (-2,0)$.
Notice that by sub-multiplicativity of $\|\cdot\|$, and hence super-multiplicativity of $\|\cdot\|^t$,
$$
\sum_{I\in \Sigma_{n+m}}\|\A_I\|^t\ge \sum_{I\in\Sigma_n}\|\A_I\|^t\sum_{J\in\Sigma_m}\|\A_J\|^t.
$$
Hence, to show that $P(t\Phi_\A)>0$, it suffices to show $\sum_{I\in\Sigma_n}\|\A_I\|^t>1$ for some $n\in\N$.
Let $t\in (-2,0)$, writing
\color{black}
$t=-2+\alpha$ and let $n\in\N$. 
Then 
$$
\begin{aligned}
    \sum_{I\in \Sigma_n}\|\A_I\|^t&=\sum_{I\in\Sigma_n}\frac {\|\A_I\|^\alpha}{\|\A_I\|^2}\\
    &\ge \frac 1C\sum_{I\in\Sigma_n}\frac{\operatorname{Leb}(J_I)}{k(I)}\|\A_I\|^\alpha\\
    &=\int_0^1 f_n(x)\,dx,
\end{aligned}
$$
where we used Lemma \ref{lem:Mobius}\ref{it:normest}, and where 
$$
f_n(x)=\frac{\|\A_{I_n(x)}\|^\alpha}{C\,k(I_n(x))}.
$$
Therefore, it suffices to show that $f_n(x)\to\infty$ as $n\to\infty$
for $\operatorname{Leb}$-a.e. $x$. 
In fact, we prove the stronger statement
\begin{equation}\label{eq:diverg}
    \frac{\log\|\A_{I_n(x)}\|}{\max(1,\log k(I_n(x)))}\to\infty\text{ for $\operatorname{Leb}$-a.e. $x$.}
\end{equation}

To show this, we look at returns of the dynamical system $S$ introduced in 
Lemma \ref{lem:infmeas} to the set $B=[\frac 13,\frac 23]=f_0([\frac 12,1])\cup f_1([0,\frac 12])$. 
These are exactly the places where $I(x)\ne I(S(x))$, that is the places where blocks of 0's
switch to 1's and vice versa. 
As in section \ref{subsec:induced}, we define $r^j_B(x)$
to be the $j$th return time to $B$ (or the $j$th entry time if $x\not\in B$).
Let $E_j(x)$ be the $j$th excursion time, $r_B(S^{r^{j-1}_B(x)}(x))$. 
Let $V_n(x)=\mathbf 1_B(x)+\ldots+\mathbf 1_B(S^{n-1}x)$ denote the number of visits to $B$
in the first $n$ steps. 
We observe that 
\begin{equation}\label{eq:kbound}
k(I_n(x))\le E_{V_n(x)}(x).
\end{equation}
Similarly, by a calculation very similar to the one in Lemma \ref{lower bound} (except where
we induce on a change from 0's to 1's or 1's to 0's rather than just on changes from 1's to 0's),
we have
\begin{equation}{\label{eq:normbound}}
\|\A_{I_n(x)}\|\ge E_1(x)\cdots E_{V_n(x)-1}(x).
\end{equation}
Letting $\lambda_B$ be the conditional probability 
measure $\lambda_B(U)=\lambda(B\cap U)/\lambda(U)$,
we have that $\lambda_B$ is invariant for the transformation $S_B$. 
We claim that $\log E_0(x)$ is integrable on $B$ with respect to $\lambda_B$. 
To see this, notice that for $x\in B$, $E_0(x)=k$ if $I(x)\ne I(S(x))$,
$I(S(x))=\ldots=I(S^k(x))$ and $I(S^k(x))\ne I(S^{k+1}(x))$. 
That is for $x\in B$, $E_0(x)=k$ if and only if $x\in f_0([\frac k{k+1},\frac{k+1}{k+2}])
\cup f_1([\frac{1}{k+1},\frac{1}{k+2}])$. Hence, $\lambda_B(\{x\colon E_0(x)=k\})=\Theta(\frac 1{k^2})$. 
The integrability of $\log E_0$ follows. 

By the ergodic theorem (applied to the ergodic probability-preserving transformation
$S_B$), 
$$
\frac 1m\sum_{i=0}^{m-1}\log E_0(S_B^i(x))\to\int\log E_0\,d\lambda_B
$$
for $\lambda_B$-a.e. $x\in B$. 
Since $\lambda$ is ergodic for $S$, the same holds for $\lambda$-a.e.
$x\in [0,1]$, and hence for $\operatorname{Leb}$-a.e. $x\in [0,1]$. 
A corollary of the ergodic theorem shows that 
$$
\frac{\log E_0(S_B^m(x))}m\to 0
$$
for $\lambda_B$-a.e. $x\in B$. This also applies for $\operatorname{Leb}$-a.e. $x\in [0,1]$. 

As a consequence of these statements together with \eqref{eq:kbound} and \eqref{eq:normbound}, we see that for
$\operatorname{Leb}$-a.e. $x$, 
$$
\begin{aligned}
   &\liminf_{n\to\infty} \frac{\log \|\A_{I_n(x)}\|}{V_n(x)}>0\text{ while }\\
   &\lim_{n\to\infty}\frac{\log \max(1,\log k(I_n(x)))}{V_n(x)}=0.
\end{aligned}
$$
The claim \eqref{eq:diverg} follows, so that $P(t\Phi_\A)>0$ as required.

It remains to show that for $t>-2$, there exists a unique equilibrium 
measure supported on $\Sigma \setminus \{\bar{0}, \bar{1}\}$, while for $t=-2$,
there are no ergodic equilibrium measures other than $\delta_{\bar 0}$ and $\delta_{\bar 1}$. 

Let $t>-2$ be fixed and let $P=P(t\Phi_{\A})>0$.
We let $\psi_{s,n}(x)=e^{-nP}\|\A^n(x)\|^{s+t}$
and define a family of potentials (as $s$ runs over $\R$) by
$\Psi_s=(\log\psi_{s,n})_{n\in\N}$. By construction,
$P(\Psi_s)=-P+P((t+s)\Phi_\A)$, so that $P(\Psi_s)$ is defined
for all $s\in\R$ and is a convex function of $s$. In particular,
$P(\Psi_0)=0$.
We also define a potential on the induced system. Let
$$
\psi_{D,s,k}(x)=\psi_{s,r_k(x)}(x)\text{ for $x\in D$}.
$$
where $D=[1.0]=\{x\colon x_{-1}=1, x_0=0\}$, and define the induced potential 
$\Psi_{D,s}=(\log\psi_{D,s,k})_{k\in\N}$, so that

$$
\begin{aligned}
\psi_{D,s,k}(x)&=e^{-P\,r_k(x)}\|B_{m_{k-1},n_{k-1}}\cdots B_{m_0,n_0}\|^{s+t}\text{ if $T_D^jx\in X_{m_j,n_j}$ for $j=0,\ldots,k-1$.}
\end{aligned}
$$
We introduce a symbolic system that is isomorphic to $(D,T_D)$. Let $\Omega=(\N^2)^\Z$ be equipped
with the shift map $T_\Omega$. 
The isomorphism is $\pi\colon D\to\Omega$, given by
$$
\pi(x)_j=(m,n)\text{ if $T_D^j(x)\in X_{m,n}$.}
$$
This map is a bijection from the set of points in $D$ that return to $D$ infinitely often in the past and
the future (a set of measure 1 with respect to any invariant probability measure supported on $D$)
to $\Omega$. 

We define the one-step 
matrix cocycle $\A_\Omega(\omega)$ by
$$
\A_\Omega(\omega)=B_{m,n}:=\begin{bmatrix}
    mn&n\\m&1
\end{bmatrix}\text{ if $\omega\in[(m,n)]$}
$$
over the shift map  $(\Omega, T_{\Omega})$.
This has the property that 
$$
\A_\Omega^k(\pi(x))=\A_D^k(x)\text{\quad for all $k\in\N$ and all $x\in D$.}
$$
We define a potential on $\Omega$ by $\Psi_{\Omega,s}=(\log\psi_{\Omega,s,k})_{k\in\N}$, with
$$
\psi_{\Omega,s,k}(\omega)=e^{-(M_k+N_k)P}\|\A_\Omega^k(\omega)\|^{s+t},
$$
where $M_k=m_0+\ldots+m_{k-1}$ and $N_k=n_0+\ldots+n_{k-1}$. 
We observe that 
$$
\psi_{\Omega,s,k}(\pi(x))=\psi_{D,s,k}(x)\text{\quad for all $x\in D$ and $k\in\N$}.
$$
In particular, we see
a $T_D$-invariant probability measure $\nu_D$ is an equilibrium state for $\Psi_{D,s}$
if and only if $\pi_*\nu_D$ is an equilibrium state for $\Psi_{\Omega,s}$. 
\color{black}
We make the following claims about $\Psi_{\Omega,s}$:
\begin{enumerate}
    \item $\Psi_{\Omega,s}$ is almost additive for each $s\in\R$. 
    \label{claim:almost additive}
    \item $\sum_{m,n=1}^\infty \sup_{\omega\in[(m,n)]}\psi_{\Omega,s,1}(\omega)<\infty$ for all
    $s\in\R$; \label{claim:summability}
    \item $P(\Psi_{\Omega,s})<\infty$ for all $s\in\R$;\label{claim:finite pressure}
    \item $s\mapsto P(\Psi_{\Omega,s})$ is convex;\label{claim:convex}
    \item $P(\Psi_{\Omega,s})>0$ if and only if $P(\Psi_s)>0$;\label{claim:T correspondence}
    \item $P(\Psi_{\Omega,0})=0$.\label{claim:zero pressure}
\end{enumerate}

Claim \eqref{claim:almost additive} follows from Lemma \ref{Almost additivity}.

For claim \eqref{claim:summability}, notice that $\psi_{\Omega,s,1}(\omega)=
e^{-(m+n)P}\|B_{m,n}\|^{s+t}$ for all $\omega\in[(m,n)]$. A simple calculation shows 
\begin{equation}
mn\leq \left\|\begin{bmatrix}1+mn & n \\ m & 1\end{bmatrix}\right\| \leq 4mn\label{eq:Bupperlower}
\end{equation}
for each $m,n \in \N$. 
Hence to establish \eqref{claim:summability}, it suffices to check that 
$$
\sum_{m,n=1}^\infty e^{-(m+n)P}(mn)^{s+t}<\infty.
$$
Since this quantity is the square of $\sum_{n=1}^\infty e^{-nP}n^{s+t}$ and $P>0$,
the claim holds. 

The deduction of \eqref{claim:finite pressure} from claims \eqref{claim:almost additive} and
\eqref{claim:summability} appears in \cite{IommiYayama2012}.
We give a self-contained proof. We have 
$$
\begin{aligned}
P(\Psi_{\Omega,s})
&=\lim_{k\to\infty}
\frac 1k\log\sum_{\mathbf m\in(\N^2)^k}e^{-(M_k+N_k)P}\|B_{m_{k-1},n_{k-1}}\cdots B_{m_0,n_0}\|^{s+t}\\
&\le\lim_{k\to\infty}
\frac 1k\log\sum_{\mathbf m\in(\N^2)^k}e^{-(M_k+N_k)P}\|B_{m_{k-1},n_{k-1}}\cdots B_{m_0,n_0}\|^{|s+t|}\\
&\le \lim_{k\to\infty}
\frac 1k\log\sum_{\mathbf m\in(\N^2)^k}\prod_{j=0}^{k-1}
e^{-(m_j+n_j)P}\|B_{m_j,n_j}\|^{|s+t|}\\
&=\lim_{k\to\infty}\frac 1k \log\left(\sum_{(m,n)\in \N^2}e^{-(m+n)P}\|B_{m,n}\|^{|s+t|}\right)^k\\
&=\log \sum_{(m,n)\in \N^2}e^{-(m+n)P}\|B_{m,n}\|^{|s+t|}\\
&\le\log\sum_{m,n=1}^\infty e^{-(m+n)P}(4mn)^{|s+t|}\\
&=2\log\sum_{n=1}^\infty e^{-nP}(2n)^{|s+t|}<\infty.
\end{aligned}
$$
In the sixth line, we used \eqref{eq:Bupperlower}.

For claim \eqref{claim:convex}, convexity of $s\mapsto P(\Psi_{\Omega,s})$
follows from a standard argument (e.g., see \cite[Section 3]{PU}) using 
H\"older's inequality and the
fact that $\psi_{\Omega,\alpha s+(1-\alpha)s',k}=\psi_{\Omega,s,k}^\alpha\psi_{\Omega,s',k}^{1-\alpha}$.

For claim \eqref{claim:T correspondence}, if $P(\Psi_s)>0$, by the variational principle, there is an ergodic
invariant measure $\mu$ on $\Sigma$ such that \[h_{\mu}(T)+\lim_{n\to\infty}
\frac 1n\int\log\psi_{s,n}\,d\mu>0.\] Using \eqref{AbramovKnill}, one can check
$$
h_{\mu_D}(T_{D})+\lim_{k\to\infty}\frac 1k\int\log\psi_{D,s,k}\,d\mu_D=
\frac{1}{\mu(D)}\left(h_{\mu}(T)+\lim_{n\to\infty}
\frac 1n\int\log\psi_{s,n}\,d\mu\right),
$$
where $\mu_D$ is the induced measure as usual, so that $h_{\mu_D}(T_{D})+\lim_{k\to\infty}
\frac 1k\int\log\psi_{D,s,k}\,d\mu_D>0$.
Pushing forward $\mu_D$ under the isomorphism to a measure on $\Omega$,
and using the variational principle again, we see $P(\Psi_{\Omega,s})>0$. 

Now suppose that $P(\Psi_s)\le 0$. By Theorem \ref{thm:IY}\eqref{it:VP}, in order to
show $P(\Psi_{\Omega,s})\le 0$,
it suffices to show that $h_{\nu}(T_{\Omega})+\lim\frac 1k\int \log\psi_{\Omega,s,k}\,d\nu\le 0$
for all invariant probability measures $\nu$ supported on a finite symbol full subshift 
of $\Omega$. Let $\nu$ be such a measure. 
Using the isomorphism $\pi$, the measure $\nu$ corresponds to a $T_D$-invariant measure $\nu_D$ on $D$. 
Since there are finitely many symbols and $r_D(x)=m+n$ if $x\in X_{m,n}$, we see that $r_D$ is bounded. 
Hence $\int r_D\,d\nu_D<\infty$. By Subsection \ref{subsec:induced}, $\nu_D$
lifts to an invariant probability measure $\mu$ on $\Sigma$. Since we assumed that 
$P(\Psi_s)\le 0$, it follows from the variational principle that 
$$
h_{\mu}(T)+\lim_{n\to\infty}\frac 1n\int\log\psi_{s,n}\,d\mu\le 0.
$$
Using \eqref{AbramovKnill} again, we see that 
$$
h_{\nu}(T_{\Omega})+\lim_{k\to\infty}\frac 1k\int\log\psi_{\Omega,s,k}\,d\nu\le 0.
$$
Hence $P(\Psi_{\Omega,s})\le 0$.

For claim \eqref{claim:zero pressure}, $s\mapsto P(\Psi_{\Omega,s})$ and $s\mapsto
P(\Psi_s)$ are convex (and hence continuous) functions defined for $s\in\R$. 
Applying claim \eqref{claim:T correspondence}, we see that $P(\Psi_{\Omega,s})>0$ for all
$s>0$ and $P(\Psi_{\Omega,s})\le 0$ for all $s\le 0$. It follows that $P(\Psi_{\Omega,0})=0$
as required.

We apply Theorem \ref{thm:IY}\eqref{it:Gibbs} to $\Psi_{\Omega,0}$. The hypotheses are verified by
claims \eqref{claim:almost additive} and \eqref{claim:summability}. Hence, there is a 
Gibbs equilibrium measure $\mu_{\Omega}$ for $\Psi_{\Omega,0}$. We further check that $\mu_{\Omega}$ 
is the unique equilibrium measure. It suffices to show that 
$h_{\mu_{\Omega}}(T_{\Omega})<\infty$. By the Gibbs property, 
$$
\mu_{\Omega}([(m ,n)]) \approx \|B_{m ,n}\|^te^{-(m+n)P}.
$$ 
In particular, $\mu_\Omega$ is fully supported on $\Omega$. 
By the above calculation, $\|B_{m ,n}\|^t \approx (mn)^t$. 
Since $P>0$, we see that $\mu_\Omega([(m,n)])$ decays exponentially.  This implies that 
the entropy of the generating 
partition $\{[(m,n)]\colon m,n\in\N^2\}$ is finite. 
Therefore, $h_{\mu_{\Omega}}(T_{\Omega})$ is finite. We have therefore 
established that there is a unique equilibrium measure on $(\Omega,T_{\Omega})$ for the potential $\Psi_{\Omega, 0}$.
It follows, using the isomorphism, that there is a unique equilibrium measure on $(D,T_D)$ for the potential $\Psi_{D,0}$.
We verify that this lifts to an invariant probability measure on $\Sigma$: we require
\[\int r_1\,d\mu_D<\infty.\] Using the isomorphism between $T_D$ and $T_{\Omega}\colon\Omega\to\Omega$,
this condition is equivalent to the condition
$\sum_{m,n}(m+n)\mu_\Omega[(m,n)]<\infty$. Since $P>0$, this is clearly satisfied as the terms 
decay exponentially. 

Since $\mu_D$ is an equilibrium measure for $\Psi_D$ and $P(\Psi_D)=0$, we have \[h_{\mu_D}(T_D)+
\lim_{k\to\infty}\frac 1k\int\psi_{D,0,k}\,d\mu_D=0.\]
By \eqref{AbramovKnill}, it follows that
$h_\mu(T)+\lim_{k\to\infty}\frac 1k\int\psi_{0,k}\,d\mu=0$. 
That is, $\mu$ is an equilibrium measure for $\Psi_0$. 
This argument shows that there is a bijection between ergodic $T_D$-invariant 
equilibrium measures on $D$ for $\Psi_{D,0}$ satisfying $\int r_1\,d\mu_D<\infty$
and ergodic $T$-invariant equilibrium measures $\mu$ on $X$ for $\Psi_0$ with the property that
$\mu(D)>0$.

Since $\mu_\Omega$ is the unique equilibrium measure for $\Psi_{\Omega,0}$, it follows that
$\mu$ is the unique ergodic equilibrium measure for $\Psi_0$ for which $\mu(D)>0$. 
However an ergodic measure for which $\mu(D)=0$ is either $\delta_{\bar 0}$ or 
$\delta_{\bar 1}$ and neither of these measures is an 
equilibrium measure for $\Psi_0$. Hence $\mu$ is the unique equilibrium measure for $\Psi_0$.
The fact that $\mu_\Omega$ is fully supported implies that $\mu$ is fully supported also. 

In the case $t=-2$, we have $P(-2\Phi_\A)=0$. The conditions for Theorem
\ref{thm:IY}\eqref{it:Gibbs} still hold, giving a Gibbs equilibrium measure $\mu_\Omega$ on
$\Omega$. Since $P=0$,
the cylinder sets have measure $\mu_\Omega([(m,n)])\approx(mn)^{-2}$. 
The finiteness of the entropy also holds so this is the unique
equilibrium measure $\mu_\Omega$ on $\Omega$ for the 
potential $\Psi_{\Omega,0}=-2\Phi_{\A_\Omega}$.
One can see, from the fact that $\sum_{m,n}(m+n)(mn)^{-2}=\infty$, 
that the expected return time is infinite.

Accordingly, as described in Subsection \ref{subsec:induced},
for $t=-2$, the equilibrium measure on $\Omega$ does not lift to an equilibrium measure
on $\Sigma$ and the only equilibrium measures for $-2\Phi_\A$ are $\delta_{\bar 0}$ and 
$\delta_{\bar 1}$.

\end{proof}

\bibliographystyle{abbrv}
\bibliography{Existence-of-Eq}
 
\end{document}